\newcommand{\beq}{\begin{equation}}
\newcommand{\eeq}{\end{equation}}
\newcommand{\ba}{\begin{array}}
\newcommand{\ea}{\end{array}}
\newcommand{\bea}{\begin{eqnarray*}}
\newcommand{\eea}{\end{eqnarray*}}
\newcommand{\bc}{\begin{center}}
\newcommand{\ec}{\end{center}}
\newcommand{\bt}{\begin{table}}
\newcommand{\et}{\end{table}}
\newcommand{\la}[1]{\label{#1}}
\newcommand{\no}{\noindent}
\newcommand{\rf}[1]{(\ref{#1})}
\newcommand{\beqno}{\begin{displaymath}}
\newcommand{\eeqno}{\end{displaymath}}
\newcommand{\been}{\begin{enumerate}}
\newcommand{\een}{\end{enumerate}}
\newcommand{\C}{\mathbb{C}}
\newcommand{\Arg}{\mathrm{Arg}}
\renewcommand{\And}{\qquad \text{ and } \qquad}
\newlength{\myheight}
\newlength{\mylength}
\newcounter{saveeqn}
\newtheorem{theorem}{Theorem}
\def\XXint#1#2#3{{\setbox0=\hbox{$#1{#2#3}{\int}$}
     \vcenter{\hbox{$#2#3$}}\kern-.5\wd0}}
\title{Solving the heat equation with variable thermal conductivity}
\author{
Matthew Farkas, Bernard Deconinck\\
Department of Applied Mathematics\\
University of Washington\\
Seattle, WA 98195-2420
}
\date{\today}
\begin{document}

\maketitle

\begin{abstract}
We consider the heat equation with spatially variable thermal conductivity and homogeneous Dirichlet boundary conditions. Using the Method of Fokas or Unified Transform Method, we derive solution representations as the limit of solutions of constant-coefficient interface problems where the number of subdomains and interfaces becomes unbounded. This produces an explicit representation of the solution, from which we can compute the solution and determine its properties. Using this solution expression, we can find the eigenvalues of the corresponding variable-coefficient eigenvalue problem as roots of a transcendental function. We can write the eigenfunctions explicitly in terms of the eigenvalues. The heat equation is the first example of more general variable-coefficient second-order initial-boundary value problems that can be solved using this approach. 
\end{abstract}

\setcounter{tocdepth}{4}
\setcounter{secnumdepth}{4}
%\tableofcontents

%%%%%%%%%%%%%%%%%%%%%%%%%%%%%%%%%%%%

\section{Introduction}

The Method of Fokas or Unified Transform Method (UTM) can be used to solve constant-coefficient Initial-Boundary Value Problems (IBVPs) \cite{deconinctrogdonvasan, JC_fokas_book}. The purpose of this paper is to demonstrate a method to generalize the UTM to solve variable-coefficient IBVPs. In \cite{treharne,vc_fokas}, Fokas and Treharne use a Lax-Pair approach to analyze some variable-coefficient IBVPs. This method reduces the problem of solving a {\em partial} differential equation (PDE) to that of solving an {\em ordinary} differential equation (ODE) by writing the solution of the PDE as an integral over the solutions to a non-autonomous ODE, but it does not provide an explicit representation of the solution. This approach, along with separation of variables, is useful if the associated ODE is a second-order, self-adjoint problem on a finite domain, for which we have standard Sturm-Liouville theory. However, it does not generalize well to problems that are not self adjoint, of higher order, or are posed on an infinite domain.

In our approach to variable-coefficient IBVPs, we break the domain into $N$ subdomains. The variable coefficients are approximated on each subdomain by constants, resulting in a constant-coefficient interface problem. We solve this problem using the UTM as shown in \cite{interface_heat, interface_schrodinger, interface_heat_ring, interface_maps, interface_kdv, interface_dispersive}. Cramer's rule gives the solution in each part as a ratio of determinants. Using an explicit expression for these determinants, we take the limit as $N$ goes to infinity. Obtaining the explicit expressions for the determinants and calculating the limit are both non-trivial steps. Finally, we obtain an explicit (albeit complicated) solution to the original variable-coefficient IBVP, useful for computating the solution, for instance. Further, our solution representation characterizes the eigenvalues of the spectral problem obtained after separation of variables and it gives the eigenfunctions explicitly in terms of these eigenvalues. 

Since the UTM is applicable to non-constant boundary conditions, higher-order and  non-self-adjoint problems, we expect our method to generalize similarly. Indeed, we have found explicit solutions for general, second-order IBVPs with spatially-variable coefficients and with general boundary conditions in terms of sums and integrals over known quantities. This will be reported in \cite{farkasdeconinck2}. Although our approach is entirely different, some of our notation has been inspired by \cite{PT}.
%In Section~\ref{sec:statement}, we state our result for the variable-coefficient heat equation on the finite-interval with homogeneous, Dirichlet boundary conditions. In Section~\ref{sec:derivation}, we give a formal derivation of the solution, which we can prove rigorously is a solution.
%, and in Section~\ref{sec:proofs}, we outline the necessary steps to prove that our result is a solution to the IBVP. 
%This is shown in Section~\ref{sec:eigenvalues}. We conclude with some numerical results in Section~\ref{sec:numerics}.

% Interpretation: Corrections are measure of symmetry across the interval. ``Completely symmetric'' ({\em i.e.} constant $\sigma(x)$) yields no corrections; symmetry across $x=1/2$ (for $x\in(0,1)$), ({\em i.e.} $\sigma(x) = \sigma(1-x)$) yields no first-order correction; etc.

\section{The heat equation with homogeneous, Dirichlet boundary conditions}

%\subfile{sections/statement}
\label{sec:statement}
Consider the heat equation on the finite interval, $x\in(0,1)$, with spatially-variable thermal conductivity $\sigma^2(x)$, without forcing and with homogeneous, Dirichlet boundary conditions:

\begin{subequations}\label{eqn:IBVP}
\begin{align} \label{eqn:IBVP_PDE}
q_t &= \left(\sigma^2(x) q_{x}\right)_x, && x\in(0,1), \quad t>0, \\ \label{eqn:IBVP_IC}
q(x,0)&=q_0(x), && x\in(0,1), \\ \label{eqn:IBVP_BC}
q(0,t)&= q(1,t)= 0, && t>0,%\\
\end{align}
\end{subequations}

\no where the $x$ index denotes partial differentiation. 

\begin{theorem}
If $\sigma(x)>0$ is absolutely continuous and if $\sigma'(x)/\sigma(x)$ and $q_0(x)$ are absolutely integrable, then the IBVP \rf{eqn:IBVP} has the solution

\beq \label{eqn:q_sol}
q(x,t) = \frac{1}{i\pi} \int_{\partial \Omega} \frac{\Phi(k,x)}{\Delta (k)}  e^{-k^2t}\, dk, 
\eeq

\no where $\Omega=\{k\in\C: \pi/4 < \Arg(k) < 3\pi/4 \text{ and } |k|>r\}$ for some $r>0$ as shown in Figure~\ref{fig:Omega}{\textcolor{blue}a}, with

\begin{figure}[tb]
    \centering
    \begin{tabular}{ccc}
    \includegraphics[scale=0.5]{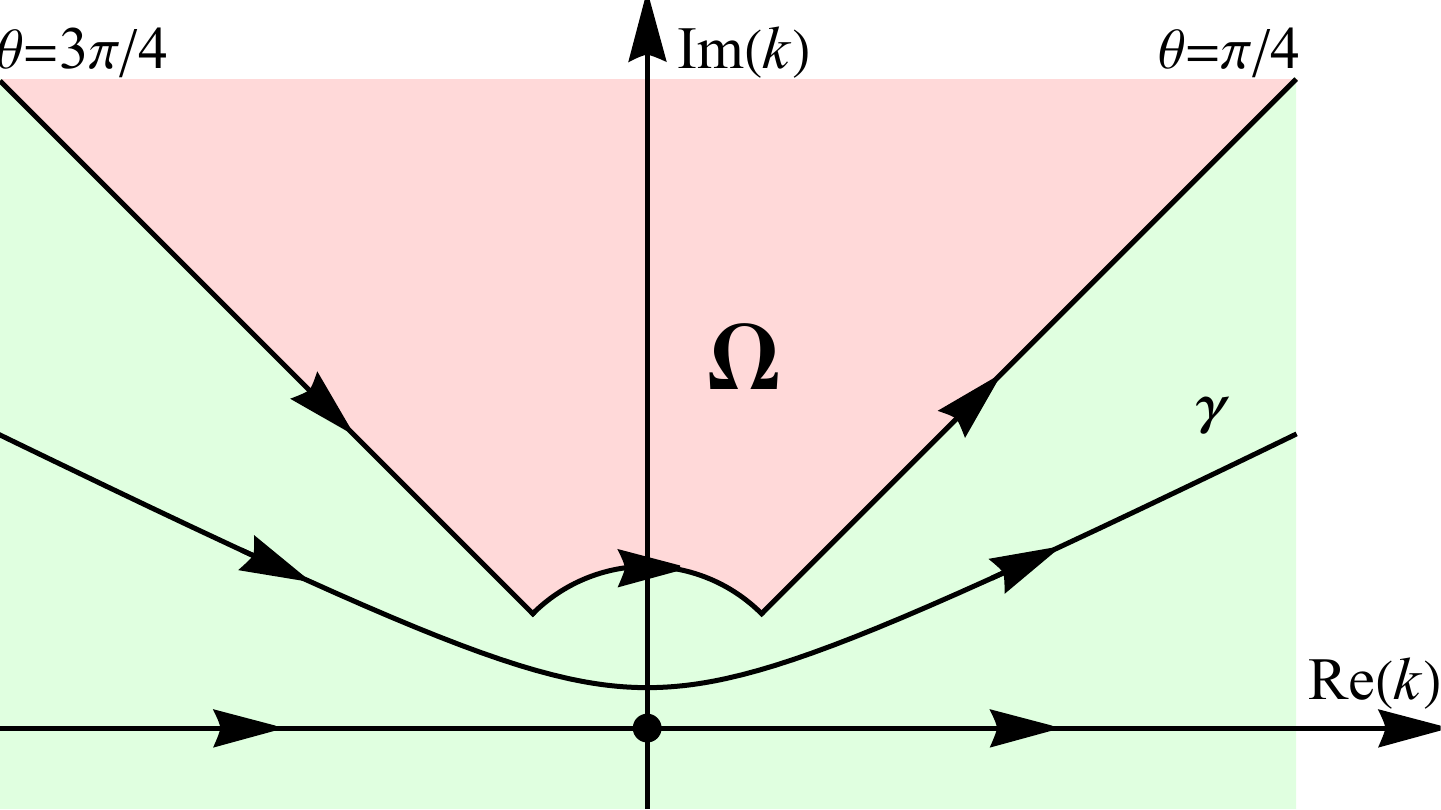} 
    & \hspace*{0.1in} &
    \includegraphics[scale=0.5]{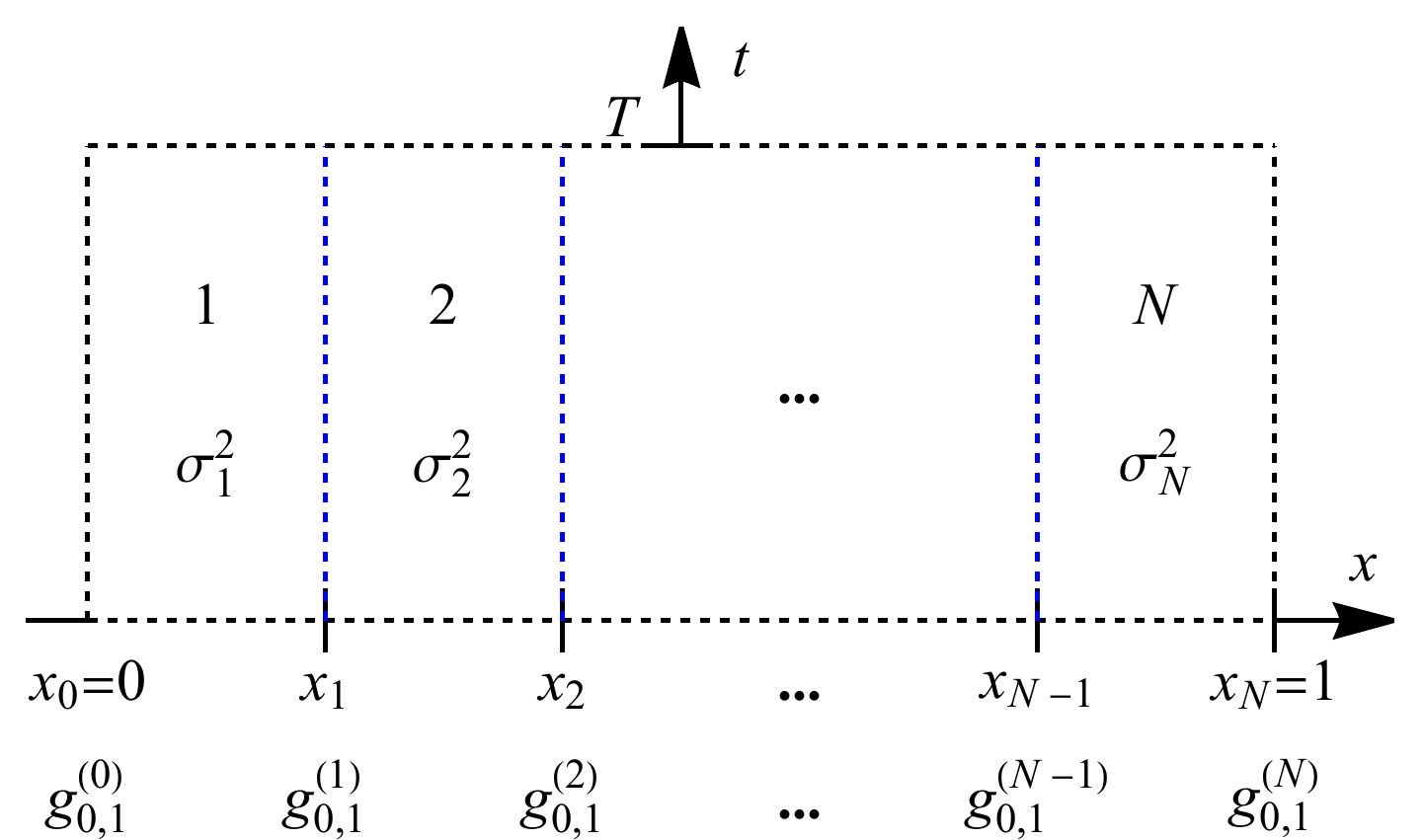} \\
    (a) & \hspace*{0.1in} & (b)
    \end{tabular}
    \caption{(a) The region $\Omega$ and the deformed contour $\gamma$. 
    (b) A partition of the finite interval $(0,1)$.}
    \label{fig:Omega}
\end{figure}

\beq\label{deltaphi}
\Delta(k) = \sum_{n=0}^\infty  \mathcal S_n^{(0,1)}(k) \And \Phi(k,x) = \int_0^1 \frac{\Psi(k,x,y)q_0(y)} {\sqrt{\sigma(x)\sigma(y)}} dy,
\eeq

\no where, for $0<y<x<1$,

\beq
\Psi(k,x,y) = \left( \sum_{n=0}^\infty  \mathcal S_{n}^{(0,y)}(k) \right) \left( \sum_{n=0}^\infty  \mathcal S_n^{(x,1)}(k)\right) =  \sum_{n=0}^\infty \sum_{\ell=0}^n \mathcal S_{n-\ell}^{(0,y)}(k) \mathcal S_\ell^{(x,1)}(k) ,
\eeq

\no and $\Psi(k,x,y) = \Psi(k,y,x)$ for $0<x<y< 1$. Here

\beq \label{eqn:Sn}
\mathcal S_n^{(a,b)}(k) = \frac{1}{2^n}\int_{a=y_0 \leq y_1 \leq \cdots  \leq y_n \leq y_{n+1}=b} \left(\prod_{p=1}^n \frac{\sigma'(y_p)}{\sigma(y_p)} \right) \sin\left(k \sum_{p=0}^{n} (-1)^p \int_{y_p}^{y_{p+1}}\frac{d\xi}{\sigma(\xi)}\right) \, dy_1 \cdots dy_n.
\eeq
\no Note that this reduces to the solution given in \cite{JC_fokas_book} for constant $\sigma(x)$.
\end{theorem}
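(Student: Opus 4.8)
The plan is to follow the route announced in the introduction --- discretize, solve a constant-coefficient interface problem with the UTM, evaluate the determinants explicitly, take the limit --- and then to confirm the resulting formula directly. First I would fix a partition $0=\xi_0<\xi_1<\cdots<\xi_N=1$ (Figure~\ref{fig:Omega}b) and replace $\sigma$ by a piecewise-constant approximation equal to $\sigma_j>0$ on $(\xi_{j-1},\xi_j)$, so that on each subinterval \rf{eqn:IBVP_PDE} reduces to $q_t=\sigma_j^2q_{xx}$; at each interior node I impose continuity of $q$ and of the flux $\sigma^2q_x$ (the finite-volume form of $(\sigma^2q_x)_x=q_t$), keep the Dirichlet conditions at $\xi_0,\xi_N$, and use the piecewise approximation of $q_0$ as data. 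This is the constant-coefficient interface IBVP solved by the UTM in \cite{interface_heat,interface_schrodinger,interface_heat_ring,interface_maps,interface_kdv,interface_dispersive}: the global relation on each subdomain together with the $2N$ boundary and interface conditions yields a linear system for the unknown spectral functions whose matrix is block-banded (the chain of subdomains); Cramer's rule writes $q^{(N)}$ in the subdomain containing $x$ as a ratio of two determinants, and the standard UTM contour deformations (using analyticity in $k$ and the boundedness of $e^{-k^2t}$ on $\partial\Omega$) put $q^{(N)}$ in the form \rf{eqn:q_sol} with mesh-dependent $\Phi^{(N)}$ and $\Delta^{(N)}$.

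The technical heart is to evaluate these two determinants. Because the matrix encodes a one-dimensional chain of media, its cofactor expansion reorganizes into a sum over lattice paths through the nodes, each path weighted by a product of interface reflection coefficients $R_j=\frac{\sigma_j-\sigma_{j+1}}{\sigma_j+\sigma_{j+1}}=O(\xi_j-\xi_{j-1})$ and transmission coefficients $T_j=\frac{2\sigma_j}{\sigma_j+\sigma_{j+1}}$, times oscillatory factors built from the optical lengths $\ell_j=\int_{\xi_{j-1}}^{\xi_j}d\xi/\sigma$ of the subintervals it traverses. Grouping the paths by the number $n$ of reflection events, using $R_j\approx-\tfrac12(\sigma'/\sigma)(\xi_j-\xi_{j-1})$ for small jumps, and telescoping the transmission factors (a product of the shape $\sqrt{\sigma(\cdot)/\sigma(\cdot)}$, which supplies the Liouville weight $1/\sqrt{\sigma(x)\sigma(y)}$ in \rf{deltaphi}), I would identify $\Delta^{(N)}$ and $\Phi^{(N)}$ as the discrete analogues of $\sum_n\mathcal S_n^{(0,1)}$ and of $\Phi$; the alternating integral inside the sine in \rf{eqn:Sn} is the net phase of a path that bounces forward and backward among its $n$ reflection points. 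Refining the mesh, the discrete path sums converge to the iterated integrals \rf{eqn:Sn} by a Riemann-sum argument, and since $\sigma'/\sigma\in L^1$ one has the uniform bound $|\mathcal S_n^{(a,b)}(k)|\le\frac{1}{2^n n!}\big(\int_a^b|\sigma'/\sigma|\big)^n$ on the real axis (with a harmless $k$-dependent factor off it), so the series in \rf{deltaphi}--\rf{eqn:Sn} converge absolutely and locally uniformly in $k$, the ratio $\Phi/\Delta$ stays bounded on $\partial\Omega$, and $q_0\in L^1$ then lets me interchange the $n$-sum, the $y$-integral and the $k$-integral. It remains to prove $q^{(N)}\to q$ and that $q$ solves \rf{eqn:IBVP}.

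I would establish this last point by verifying \rf{eqn:q_sol} a posteriori. Differentiating the series in $x$ term by term shows that $\sum_n\mathcal S_n^{(0,x)}(k)/\sqrt{\sigma(x)}$ and $\sum_n\mathcal S_n^{(x,1)}(k)/\sqrt{\sigma(x)}$ satisfy the recursion equivalent to $(\sigma^2u')'+k^2u=0$, and they vanish at $x=0$ and $x=1$ respectively; hence $\Psi(k,x,y)/\sqrt{\sigma(x)\sigma(y)}$ is the Dirichlet Green's function of $(\sigma^2\partial_x)_x+k^2$, with $\Delta(k)$ proportional to its modified Wronskian. Therefore $\Phi/\Delta$ solves $(\sigma^2(\Phi/\Delta)_x)_x+k^2(\Phi/\Delta)=-k\,q_0(x)$ (the constant read off from the Green's-function jump), so $(\partial_t-(\sigma^2\partial_x)_x)q=\tfrac{q_0(x)}{i\pi}\int_{\partial\Omega}ke^{-k^2t}\,dk=0$ because $ke^{-k^2t}$ has the elementary antiderivative $-\tfrac1{2t}e^{-k^2t}$, which vanishes at the two ends of $\partial\Omega$; here it is essential that this constant is linear in $k$ rather than of higher degree. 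The Dirichlet conditions are immediate: $\sum_n\mathcal S_n^{(0,0)}=\sum_n\mathcal S_n^{(1,1)}=0$ forces $\Psi(k,0,y)=\Psi(k,1,y)=0$, hence $q(0,t)=q(1,t)=0$. For the initial condition one deforms $\partial\Omega$ onto the real axis, where $\Delta$ has its (real) zeros $\pm k_m$ corresponding to the eigenvalues of the self-adjoint Sturm--Liouville problem; the residues produce the eigenfunction expansion, and $q(x,t)\to q_0(x)$ as $t\to0^+$ by completeness.

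The step I expect to be the main obstacle is this $t\to0^+$ limit. With the Gaussian damping $e^{-k^2t}$ gone one must justify the deformation of $\partial\Omega$, the decay of the arcs at infinity, and the convergence of the resulting series and integrals in a suitable sense; this is exactly where absolute continuity of $\sigma$ and the integrability of $q_0$ and $\sigma'/\sigma$ are genuinely used, and where the choice of $r>0$ matters --- it must keep $\Delta(k)\ne0$ on $\partial\Omega$, in particular excluding the spurious zero of $\Delta$ at $k=0$. A second, purely combinatorial difficulty is the clean identification of the $N$th Cramer determinant with the discrete $\mathcal S_n$: the reflection/transmission picture makes it transparent, but carrying it out from the block-banded system while tracking every $\sqrt{\sigma}$ normalization is delicate. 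As a consistency check, if $\sigma$ is constant then $\sigma'\equiv0$ kills every term with $n\ge1$, leaving $\Delta=\mathcal S_0^{(0,1)}=\sin(k/\sigma)$ and $\Psi=\sin(ky/\sigma)\sin\!\big(k(1-x)/\sigma\big)$ for $y<x$, which is the classical UTM representation of \cite{JC_fokas_book}.
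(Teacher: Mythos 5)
Your proposal follows essentially the same route as the paper's derivation: partition into $N$ constant-coefficient subdomains, solve the resulting interface problem with the UTM, write the interface values via Cramer's rule, evaluate the determinants as sums over configurations of sign switches (your reflection/transmission coefficients are precisely the paper's $\Lambda_p^-/\Lambda_p^+=(\sigma_{p+1}-\sigma_p)/(\sigma_{p+1}+\sigma_p)$ and the telescoping product $\prod_{p}2\sigma_p/\Lambda_p^+\to\sqrt{\sigma_m/\sigma_\ell}$ supplying the Liouville weight), and pass to the limit to obtain the iterated integrals $\mathcal S_n^{(a,b)}$. Your a posteriori Green's-function verification and the $t\to0^+$ analysis are extra material beyond the paper's presentation, which defers the proof that the formula is well defined and solves the IBVP to a companion paper.
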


\section{Derivation}

%\subfile{sections/derivation}
\label{sec:derivation}
We form a partition $x_j$ of the interval $\left[0,1\right]$, such that $x_0=0$ and $x_N=1$, see Figure~\ref{fig:Omega}{\textcolor{blue}b}. On each subdomain, we replace \rf{eqn:IBVP_PDE} with a constant-coefficient PDE with $\sigma(x)$ replaced by $\sigma_j$ such that $\sigma_j \to \sigma(x_j)$ as $N\to\infty$, with the corresponding initial condition. At each interface, we require continuity of the solution and a jump discontinuity in the derivative of the solution, consistent with the evolution equation, {\em i.e.},

\begin{subequations}
\begin{align} \label{eqn:interface_pde}
q_t^{(j)} &= \sigma_j^2 q_{xx}^{(j)}, & q^{(j)}(x,0) &= q_0(x), && x \in (x_{j-1}, x_{j}), \quad t>0,\quad j=1,\cdots, N, \\ \label{eqn:interface_continuity} q^{(j)}(x_{j}, t) &= q^{(j+1)}(x_{j}, t), & \sigma_{j}^2q_x^{(j)} (x_{j},t) &= \sigma_{j+1}^2 q_x^{(j+1)}(x_{j}, t), && t>0, \quad j=1,\cdots, N-1,
\\ \label{eqn:interface_bc} q^{(1)}(0,t) &= 0, &  q^{(N)}(1,t) &= 0, && t>0.
\end{align}
\end{subequations}

\no Note that the derivative jump \rf{eqn:interface_continuity} can be derived by integrating \rf{eqn:IBVP_PDE} over a small interval containing $x_j$. 

We follow \cite{interface_heat, interface_schrodinger, interface_heat_ring, interface_maps, interface_kdv, interface_dispersive}. Defining for $j=1, \ldots, N$ and $m=0,1$, 

\begin{gather}
\hat q_0^{(j)}(k)= \int_{x_{j-1}}^{x_{j}} e^{-ik y} q_0(y) dy,~~~~ 
\hat q^{(j)}(k,t)= \int_{x_{j-1}}^{x_{j}}e^{-ik y}q^{(j)}(y,t) dy,~~~~ 
g_m^{(j)}(W, t) = \int_0^t e^{Ws} q_{mx}^{(j)}(x_{j}, s) ds, \\  
g_m^{(0)}(W,t)=\int_0^t e^{W s} q_{mx}^{(1)}(0,s) ds,  ~~~~~~g_0^{(0)}(W,t) = g_0^{(N)}(W,t) = 0, \label{eqn:BC}
\end{gather}

\no where these last equations originate from the boundary conditions \rf{eqn:interface_bc}.
Introducing the dispersion relation ${w_j=\sigma^2_j \kappa^2}$, we obtain the {\em local relations} corresponding to \rf{eqn:interface_pde} defined in each subdomain $D_j = (x_{j-1}, x_{j})\times (0,T)$, $j=1, \ldots, N$. Integrating over the boundary of each subdomain and  using Green's theorem, we find the {\em global relations}. Changing variables $\kappa  = \nu_j(k) = k/\sigma_j$, these are
\begin{align}  \label{eqn:globalrelations} \nonumber
e^{k^2t}\hat q^{(j)}(\nu_j, t) &= \hat q_0^{(j)}\left(\nu_j\right) + e^{-i\nu_j x_{j}}\left( \sigma_{j}^2 g_1^{(j)}(k^2,t) +i\sigma_j k g_0^{(j)}(k^2,t)\right)
\\&~~
- e^{- i\nu_j x_{j-1}}\left( \sigma_{j-1}^2 g_1^{(j-1)}(k^2,t) + i\sigma_j k g_0^{(j-1)}(k^2,t)\right), ~~~~j=1, \ldots, N.
\end{align}

These relations are valid for $k\in \C$, since all integrals are over bounded domains. Letting $k\mapsto -k$, (and $\nu_j\mapsto-\nu_j$), results in a total of $2N$ linear equations for the $2N$ unknowns $(g_0^{(1)}(k^2,t), \ldots, g_0^{(N-1)}(k^2,t), g_1^{(0)}(k^2,t), \ldots, g_1^{(N)} (k^2,t))$. We write this system of equations in matrix form as 

\beq
\mathcal A(k) X(k^2,t) =  Y(k)-e^{k^2t} \mathcal Y(k, t),
\eeq

\no where $\mathcal A(k)$ is the coefficient matrix corresponding to the global relations \rf{eqn:globalrelations}, and 
\begin{align}
X(k^2,t) &= \left( ik g_0^{(1)}(k^2,t), \ldots, i kg_0^{(N-1)}(k^2,t), \sigma_0^2 g_1^{(0)}(k^2,t), \ldots, \sigma_{N}^2 g_1^{(N)} (k^2,t) \right)^\top\!\!\!\!,\\
Y(k) &= \left(\hat q_0^{(1)}(\nu_1), \ldots, \hat q_0^{(N)}(\nu_N), \hat q_0^{(1)}(-\nu_1), \ldots, \hat q_0^{(N)}(-\nu_N)\right)^\top\!\!\!\!,\\
\mathcal Y(k,t) &= \left(\hat q^{(1)}\left(\nu_1,t\right),\ldots, \hat q^{(N)}\left(\nu_N,t\right), \hat q^{(1)}\left(-\nu_1,t\right), \ldots, \hat q^{(N)}\left(-\nu_N,t\right)\right)^\top\!\!\!\!.
\end{align}

Following \cite{interface_maps}, we can show that the contribution to the solution of $\mathcal Y$ vanishes, so that, in effect, we may solve $\mathcal A X = Y$, for the unknown functions $g_m^{(j)}$. Using Cramer's rule, 

\beq
X_j = ikg_0^{(j)}(k^2,t) = \frac{\det\big(\mathcal A_j(k)\big)}{\det\big(\mathcal A(k)\big)}, ~~~~j=1,\cdots, N-1,
\eeq

\no where the matrix $\mathcal A_j(k)$ is $\mathcal A(k)$ with the $j$th column replaced by $Y$. If we multiply this equation by $e^{-k^2t}$ and integrate over $\partial \Omega$, 
shown in Figure~\ref{fig:Omega}{\textcolor{blue}a}, we recover the solution at the interfaces $q^{(j)}(x_{j},t)$, $j=1, \ldots, N-1$, \cite{JC_fokas_book} obtaining 

\beq
q^{(j)}(x_{j},t) = - \frac{1}{\pi}\int_{\partial \Omega} \frac{\det(\mathcal A_j(k))}{\det(\mathcal A(k))} e^{-k^2t} dk ~~~~\Rightarrow~~~~ q(x,t) = \lim_{N\to\infty} q^{(j)}(x_{j},t) .
\eeq
\no It is possible to compute the solution of the full interface problem as in \cite{interface_heat, interface_schrodinger, interface_heat_ring, interface_maps}, and obtain the same limit from there. To obtain \rf{eqn:q_sol}, we proceed as follows.

We introduce 
$\Lambda_p^\pm = \sigma_{p+1} + (-1)^{\ell_p+\ell_{p+1}}\sigma_p$, with $\ell_p, \ell_{p+1}\in \{0,1\}$, using 
$\Lambda_p^{+}$ when $\ell_p= \ell_{p+1}$ and 
$\Lambda_p^-$ when $\ell_p\neq \ell_{p+1}$. We define 
\beq 
D_N(k) = \frac{i}{2}\det(\mathcal A(k)) \left(\prod_{p=1}^{N-1} 
%\frac{1}{\Delta\sigma_p^+}
\frac{1}{\Lambda_p^+} \right) = \sum_{\substack{\boldsymbol \ell \in \{0,1\}^{N}\\\ell_1=0}} \left(\prod_{p=1}^{N-1} \frac{\Lambda_p^\pm}{\Lambda_p^+} \right) \sin\left(k\sum_{p=1}^{N} \frac{(-1)^{\ell_p}\Delta x_p}{\sigma_p}\right),
\eeq
where we have used the explicit form of $\mathcal A(k)$, see \cite{interface_maps} with slight modifications. We can show that

\beq
\label{eqn:prod_asymptotics}
\prod_{p=\ell}^{m-1} \frac{\Lambda_p^+}{2\sigma_p} = \sqrt{\frac{\sigma_m} {\sigma_\ell}} + \mathcal O\left(L\right), ~~~~\mbox{and}~~~~
\frac{\Lambda_p^-}{\Lambda_p^+} = \frac{\sigma_{p+1}-\sigma_p}{\sigma_{p+1}+\sigma_p} = \frac{\sigma'(x_p)\Delta x_p}{2\sigma_p} + \mathcal O \left(L^2\right),
\eeq
\no as $N\to\infty$ and $\Delta x_p\to 0^+$, with $L=\max_p \Delta x_p$. Next, we show 
\beq
\label{eqn:Delta}
\Delta (k) = \lim_{N\to\infty} D_N(k) = 
\sum_{n=0}^\infty \mathcal S_n^{(0,1)}(k),
\eeq
\no with 
$\mathcal S_n^{(a,b)}(k)$ 
defined in
\rf{eqn:Sn}.
Let $s_1, \ldots, s_n$ be the $n$ locations where $N$-dimensional vector $\boldsymbol \ell$ has its entry switch values (from 0 to 1 or from 1 to 0), with $s_0=0$ and $s_{n+1}=N$, for convenience. Since $\Lambda_p^\pm = \Lambda_p^+$ except where the switches occur, 
\beq
D_N(k) = \sum_{n=0}^{N-1} \sum_{0=s_0\leq s_1\leq \cdots \leq s_{n+1}=N} \left( \prod_{p=1}^{n} \frac{\Lambda_{s_p}^-}{\Lambda_{s_p}^+} \right) \sin\left(k \sum_{p=0}^n (-1)^p \sum_{r=1+s_p}^{s_{p+1}} \frac{\Delta x_r}{\sigma_r} \right).
\eeq
\no Using \rf{eqn:prod_asymptotics}, \rf{eqn:Delta} follows.

Turning to the numerator, we define
\beq
\label{eqn:Phi}
E_N(k,j) = \frac{1}{2}\det(\mathcal A_j(k))\left(\prod_{p=1}^{N-1} \frac{1}{\Lambda_p^+}\right)=\sum_{m=1}^{N}\left[ Y_{m}C_{m,j}+ Y_{m+N} \tilde C_{m,j}\right], 
\eeq
\no after doing a cofactor expansion using the $j$th column of $\mathcal A_j$. Here $C_{m,j}$ and $\tilde C_{m,j}$ are the relevant scaled cofactors of $\mathcal A_j$. Using
\beq
Y_{m} = e^{-i\nu_mx_m} q_0(x_m) \, \Delta x_m + \mathcal O\left(L^2\right) \qquad \text{ and } \qquad Y_{m+N} = e^{i\nu_mx_m}q_0(x_m) \, \Delta x_m+ \mathcal O\left(L^2\right) ,
\eeq
\no as $N\to\infty$, we have
\beq
E_N(k,j) = \sum_{m=1}^N q_0(x_m) \left( e^{-i\nu_mx_m} C_{m,j} +e^{i\nu_mx_m} \tilde C_{m,j} \right) \Delta x_m  + \mathcal O(L).
\eeq
Defining
\beq
\Psi_{j,m}=\sqrt{\sigma_j \sigma_m} \left( e^{-i\nu_m x_m} C_{m,j} + e^{i \nu_mx_m }\tilde C_{m,j}\right),
\eeq
then
\beq
\Phi(k,x) = \lim_{N\to\infty} E_N(k,j) = \lim_{N\to\infty} \sum_{m=1}^N \frac{\Psi_{j,m}}{\sqrt{\sigma_j\sigma_m}} \Delta x_m = \int_{0}^{1} \frac{\Psi(k,x,y) q_0(y)}{\sqrt{\sigma(x)\sigma(y)}} \, dy,
\eeq
\no where $x_j \to x$ and $x_m\to y$, and $\Psi(k,x,y) = \lim_{N\to\infty} \Psi_{j,m}$. For $1\leq m < j \leq N$,
\begin{align} \nonumber
\Psi_{j,m} &= \sqrt{\frac{\sigma_j}{\sigma_m}} \left(\prod_{p=m}^{j} \frac{2\sigma_p}{\Lambda_p^+}\right) \left[ \sum_{\substack{\boldsymbol \ell = \{0,1\}^m \\\ell_1=0}} \left(\prod_{p=1}^{m-1} \frac{\Lambda_p^\pm}{\Lambda_p^+} \right) \sin\left(k\sum_{p=1}^{m} \frac{(-1)^{\ell_p}\Delta x_p}{\sigma_p} \right) \right]
\times\\ & \hspace{1in} \times 
\left[ \sum_{\substack{\boldsymbol \ell = \{0,1\}^{N-j} \\ \ell_{j+1}=0}} \left(\prod_{p=j+1}^{N-1} \frac{\Lambda_p^\pm}{\Lambda_p^+} \right) \sin\left(k\sum_{p=j+1}^{N} \frac{(-1)^{\ell_p}\Delta x_p}{\sigma_p} \right) \right],
\end{align}

\no Again using the explicit form of $\mathcal A(k)$ \cite{interface_maps}. Taking the limit as before in \rf{eqn:Delta}, we find for $0< y < x < 1$,
\beq
\Psi(k,x,y) = \lim_{N\to\infty} \Psi_{j,m} = \left( \sum_{n=0}^\infty \mathcal S_{n}^{(0,y)}(k) \right)\left( \sum_{n=0}^\infty \mathcal S_n^{(x,1)}(k) \right) = \sum_{n=0}^\infty \sum_{\ell=0}^n \mathcal S_{n-\ell}^{(0,y)}(k)\mathcal S_\ell^{(x,1)}(k),
\eeq
\no and similarly, for $0<x<y<1$, $\Psi(k,x,y) = \Psi(k,y,x)$.

We can prove that the solution is well defined and solves the IBVP \rf{eqn:IBVP}, see \cite{farkasdeconinck2}. 

% \section{Proofs}

% We can prove that the solution is well defined and solves the IBVP \rf{eqn:IBVP}.

%\subfile{sections/proof}

\section{Eigenvalue problem}

%\subfile{sections/extras/eigenvalues}
Consider the Sturm-Liouville problem 
\beq \label{eqn:eigenproblem}
\left( \sigma^2(x) y'\right)' = \lambda y,~~~~y(0)=0=y(1). 
\eeq
The eigenvalues $\lambda_m=-\kappa_m^2$ of this problem are related to the zeros $\kappa_m$ ($m=1, 2, \ldots$) of $\Delta(k)$. Since these eigenvalues are negative, it follows that these zeros are real. Since $\Delta(k)$ is odd, it suffices to only consider the positive zeros.

\begin{theorem}
\no The problem \rf{eqn:eigenproblem} has the eigenfunctions 
\beq \label{eqn:efuns}
X_m(x) = \frac{1}{\sqrt{\sigma(x)}} \sum_{n=0}^\infty \mathcal S_n^{(0,x)}(\kappa_n), ~~m=1, 2, \ldots \eeq
\end{theorem}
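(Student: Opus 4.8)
The plan is to produce, for \emph{every} $k\in\C$, an explicit solution of the ODE in \rf{eqn:eigenproblem} assembled from the same kernels $\mathcal S_n^{(0,x)}(k)$ that appear in \rf{eqn:Sn}, and then read the eigenvalue condition off the second boundary condition. Set $y_k(x):=\sigma(x)^{-1/2}\sum_{n\ge0}\mathcal S_n^{(0,x)}(k)$. Two facts are essentially immediate. First, $y_k(0)=0$: indeed $\mathcal S_0^{(0,x)}(k)=\sin\!\big(k\int_0^x d\xi/\sigma(\xi)\big)\to0$ as $x\to0^+$, while each $\mathcal S_n^{(0,x)}(k)$ with $n\ge1$ is an integral over the ordered simplex $0\le y_1\le\cdots\le y_n\le x$ that collapses to a point; the series converges absolutely and uniformly in $x$ (from the $1/n!$ simplex volume together with $\sigma'/\sigma\in L^1$), so the limit passes inside. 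Second, $y_k(1)=\sigma(1)^{-1/2}\sum_{n\ge0}\mathcal S_n^{(0,1)}(k)=\sigma(1)^{-1/2}\Delta(k)$ by \rf{deltaphi}. Hence, \emph{provided} $y_k$ solves $(\sigma^2 y_k')'=-k^2 y_k$ on $(0,1)$, the second boundary condition $y_k(1)=0$ holds precisely when $\Delta(k)=0$, i.e. $k=\kappa_m$; since we will also see $y_{\kappa_m}\not\equiv0$, this exhibits $X_m=y_{\kappa_m}$ as an eigenfunction for $\lambda_m=-\kappa_m^2$, which is \rf{eqn:efuns}.

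So the content is the ODE claim, and for that I would use the Liouville substitution $\tau=\tau(x):=\int_0^x d\xi/\sigma(\xi)$, $v(\tau):=\sqrt{\sigma(x)}\,y(x)$, under which \rf{eqn:eigenproblem}'s equation becomes one of the form $\ddot v+(k^2-Q(\tau))v=0$ on $(0,\tau(1))$ (dots are $d/d\tau$), with $Q$ a potential built from $\mu:=\ln\sigma$. Rather than verify this second-order equation directly — the potential involves $\mu''$, which sits badly against the $\mathcal S_n$, which only see $\mu'$ — I would decouple it. Writing $v=e^{ik\tau}\tilde\alpha(\tau)+e^{-ik\tau}\tilde\beta(\tau)$ with the standard slowly-varying-envelope constraint turns the equation into the symmetric first-order system $\dot{\tilde\alpha}=\tfrac12\mu'(\tau)\,e^{-2ik\tau}\tilde\beta$, $\dot{\tilde\beta}=\tfrac12\mu'(\tau)\,e^{2ik\tau}\tilde\alpha$, where $\mu'=d\mu/d\tau$ and $(\sigma'/\sigma)\,dy=\mu'\,d\tau$; matching the leading term $\mathcal S_0^{(0,x)}(k)=\sin(k\tau)$ fixes $\tilde\alpha(0)=\tfrac1{2i}$, $\tilde\beta(0)=-\tfrac1{2i}$ (which sum to $0$, consistent with $v(0)=0$).

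Now I would solve this system by Picard/Volterra iteration and check that it reassembles into the $\mathcal S_n$-series term by term. Because the two equations interchange $\tilde\alpha$ and $\tilde\beta$, the $n$-th iterate is an $n$-fold ordered integral over $0\le\tau_1\le\cdots\le\tau_n\le\tau$ of $\prod_{p=1}^n\tfrac12\mu'(\tau_p)$ against a single exponential $e^{\pm2ik(\tau_1-\tau_2+\tau_3-\cdots)}$ whose signs alternate; forming $v_n=e^{ik\tau}\tilde\alpha_n+e^{-ik\tau}\tilde\beta_n$ collapses the $\pm$ pair into $2^{-n}\prod_{p=1}^n\mu'(\tau_p)\,\sin\!\big(k\sum_{p=0}^n(-1)^p(\tau_{p+1}-\tau_p)\big)$ with $\tau_0=0$, $\tau_{n+1}=\tau$ — which is exactly $\mathcal S_n^{(0,x)}(k)$ from \rf{eqn:Sn} pulled back to $\tau$ (using $\int_{y_p}^{y_{p+1}}d\xi/\sigma=\tau_{p+1}-\tau_p$). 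I would check $n=1,2$ by hand to pin down the overall sign and the $2^{-n}$, then run the one-line induction; summing over $n$ (same convergence as above) gives $v=\sum_n\mathcal S_n^{(0,x(\tau))}(k)$, so $y_k=\sigma^{-1/2}v$ solves the ODE. Finally $\dot v(0)=k\ne0$ forces $y_k\not\equiv0$, closing the gap left in the first paragraph.

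The step I expect to be the real obstacle is the third paragraph's bookkeeping: making the Dyson-series output of the $2\times2$ system line up \emph{exactly} with the combinatorics baked into \rf{eqn:Sn} — the alternating phase pattern, the $2^{-n}$, the nested simplex, and the endpoint conventions $y_0=0$, $y_{n+1}=x$. A secondary nuisance is the analytic housekeeping — justifying the Liouville change of variables and term-by-term differentiation of the series under only ``$\sigma$ absolutely continuous, $\sigma'/\sigma\in L^1$'' — but these are precisely the estimates that already make $\Delta$ and $\Phi$ well defined in \rf{eqn:q_sol}, and may be imported from the well-posedness argument behind the first theorem. One could instead avoid the ODE altogether, deforming the contour in \rf{eqn:q_sol} and collecting residues at the zeros $\pm\kappa_m$ of $\Delta$, then matching the resulting series against the Sturm--Liouville expansion of the heat semigroup applied to $q_0$; but extracting the eigenfunction from that still reduces to the factorization of $\Psi(\kappa_m,x,y)$, i.e. to the same fact that the left- and right-hand ODE solutions coincide at a zero of $\Delta$, so the direct route seems cleaner.
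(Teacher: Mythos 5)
Your proposal is correct, and it reaches the paper's conclusion by a genuinely different (and much more explicit) route: the paper's entire proof is the sentence that the result follows by ``straightforward differentiation, noting the absolute convergence of the sums,'' i.e.\ a term-by-term verification that $\left(\sigma^2 X_m'\right)'=\lambda_m X_m$, with the boundary condition at $x=1$ delegated to the preceding remark that the eigenvalues correspond to the zeros $\kappa_m$ of $\Delta(k)$. You instead pass to Liouville variables $\tau=\int_0^x d\xi/\sigma$, $v=\sqrt{\sigma}\,y$, reduce to a $2\times 2$ first-order system, and identify the Picard/Volterra iterates with the $\mathcal S_n^{(0,x)}(k)$. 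I checked the bookkeeping you flag as the main risk: the system $\dot{\tilde\alpha}=\tfrac12\dot\mu\, e^{-2ik\tau}\tilde\beta$, $\dot{\tilde\beta}=\tfrac12\dot\mu\, e^{2ik\tau}\tilde\alpha$ does reproduce $\ddot v+\left(k^2-\tfrac12\ddot\mu-\tfrac14\dot\mu^2\right)v=0$, which is the Liouville form of \rf{eqn:eigenproblem}, and with $\tilde\alpha(0)=1/(2i)=-\tilde\beta(0)$ the $n$-th iterate of $v$ is exactly $\mathcal S_n^{(0,x)}(k)$ (the alternating phases match the $(-1)^p$ pattern in \rf{eqn:Sn} with no stray sign; $n=1,2$ check out). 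Your route buys three things the one-line proof does not: it explains structurally why the series solves the ODE (it is the Dyson series of the transfer system), it delivers $y_k(1)=\sigma(1)^{-1/2}\Delta(k)$ and hence the eigenvalue condition within the same computation, and it needs only $\sigma'/\sigma\in L^1$ --- it never touches $\mu''$, which the naive Liouville potential would require and which the paper's hypotheses do not supply. Two small corrections: the constraint generating your system is not the standard slowly-varying-envelope condition $e^{ik\tau}\dot{\tilde\alpha}+e^{-ik\tau}\dot{\tilde\beta}=0$ (that choice drags the potential, hence $\mu''$, into the system); your equations correspond to the factorized choice with $e^{ik\tau}\dot{\tilde\alpha}+e^{-ik\tau}\dot{\tilde\beta}=\tfrac12\dot\mu\,v$, so the system is right but mislabeled. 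Also, the theorem as printed has a typo --- $\mathcal S_n^{(0,x)}(\kappa_n)$ should read $\mathcal S_n^{(0,x)}(\kappa_m)$ --- which your argument silently and correctly repairs.
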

\begin{proof}
The proof is straightforward differentiation, noting the absolute convergence of the sums.
\end{proof}

\section{Numerical example}

%\subfile{sections/extras/numerics}
\label{sec:numerics}
With $q_0(x) = x(1-x)$ and $\sigma^2(x) = (3-(2x-1)^2)/24$, we have the exact solution $q(x,t) = x(1-x)e^{-t}$. We construct an approximation to the solution \rf{eqn:q_sol} such that $q_N(x,t) \to q(x,t)$ as $N\to\infty$ (the index $N$ does not denote differentiation):
\beq \label{eqn:qapprox}
q_N(x,t) = \frac{1}{i\pi} \int_{\gamma} \frac{\exp\left(ik \int_0^1 \frac{d\xi}{\sigma(\xi)}\right)\Phi_N(k,x)}{\exp\left(ik \int_0^1 \frac{d\xi}{\sigma(\xi)}\right) \Delta_N(k)} e^{-k^2t} \, dk, 
\eeq
\no where we multiply denominator and numerator by the exponential so that both are decaying in the upper-half complex $k$ plane, and where we truncate each series up to $n=N$. The contour $\gamma$ is used instead of $\partial \Omega$ to aid convergence as the factor $\exp(-k^2 t)$ decays along it. The results are shown in Figure~\ref{fig:heat}. 
\begin{figure}[tb]
\begin{center}
\def \sc {0.53}
\begin{tabular}{cc}
\includegraphics[scale=\sc]{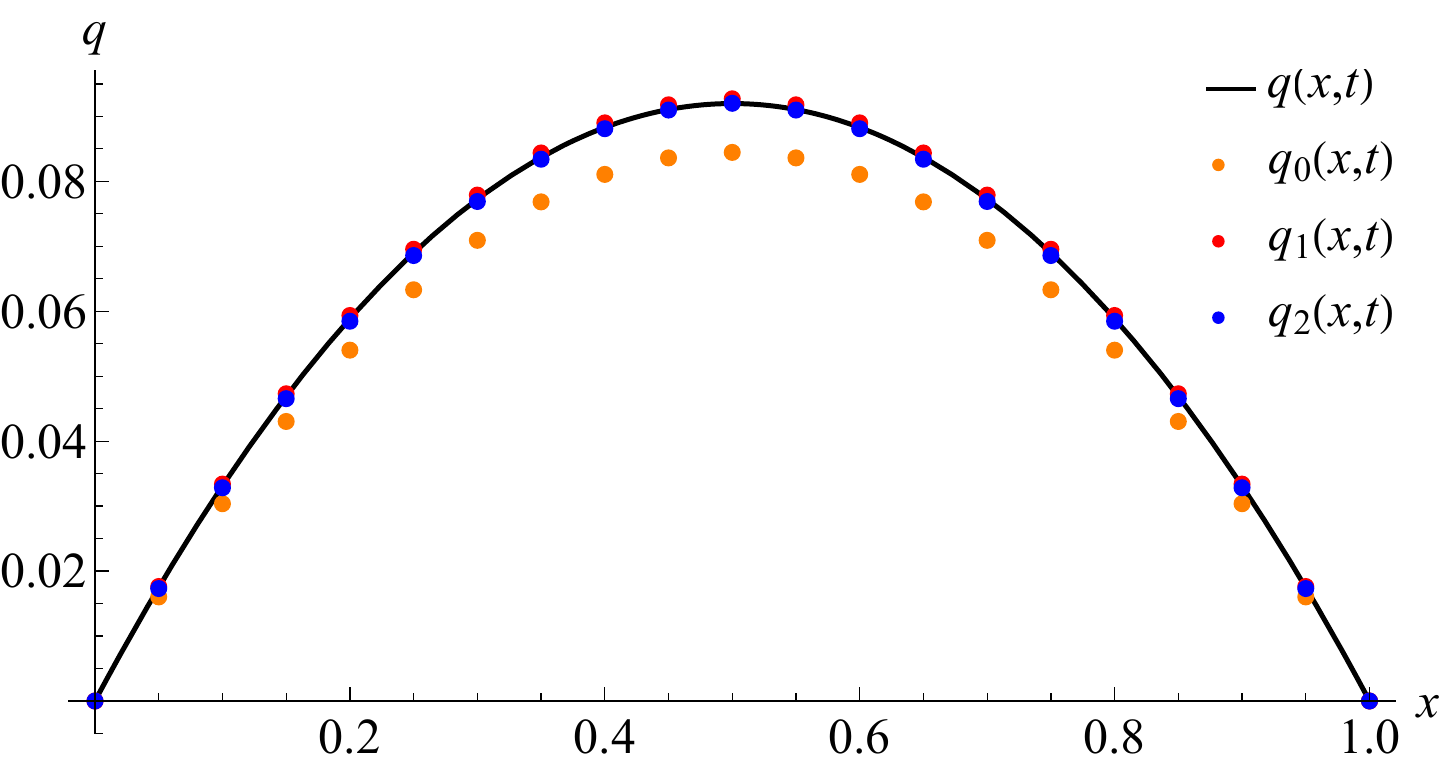} & \includegraphics[scale=\sc]{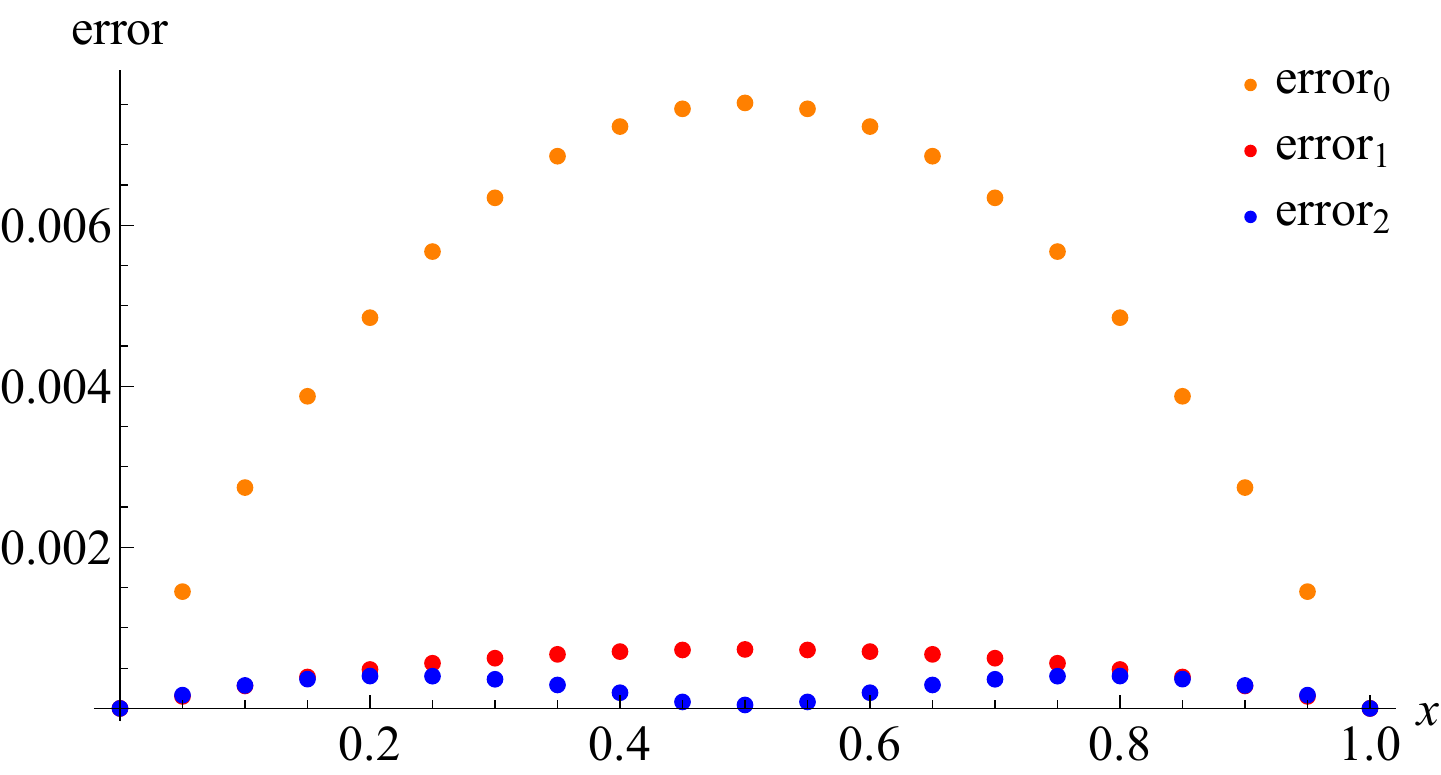}\\
(a) & (b)
\end{tabular}
\end{center}
\caption{(a) The exact solution $q(x,t) = x(1-x)e^{-t}$ and its successive approximations \rf{eqn:qapprox} for $N=0,1,2$. (b) The error of the approximations. }
\la{fig:heat}
\end{figure}

To demonstrate the computation of the eigenvalues and eigenfunctions, we consider \rf{eqn:eigenproblem} with the same $\sigma(x)$. To find the eigenvalues, we use Mathematica's FindRoot command on $\Delta_N(k)$. The results are shown in Table~\ref{tb:eigenvalues}. We see that our method converges to the eigenvalues and outperforms Mathematica's built-in NDEigenvalues command for $n=4$. Futhermore, we are able to provide explicit bounds on the eigenvalue approximations \cite{farkasdeconinck2}. Lastly, we denote the order-$N$ truncated eigenfunctions \rf{eqn:efuns} as $X_m^{(N)}(x)$. These are shown in Figure~\ref{fig:efuns}. For the simple $\sigma(x)$, given above, the order-0 truncation is quite accurate. For a more complicated $\sigma(x)$, the order-1 truncation gives an accurate representation.

\begin{table}[tb]
\begin{center}
\begin{tabular}{|c|c|c|c|c|} \hline
Method: & $\lambda_1$ & $\lambda_2$ & $\lambda_3$ & $\lambda_4$ \\ \hline
chebfun & $-1.0000$ & $-4.2540$ & $-9.6812$ & $-17.2800$ \\
NDEigenvalues & $-1.0000$ & $-4.2540$ & $-9.6818$ & $-17.2834$ \\
FindRoot: $\Delta_0(k)$ & $-1.0856$ & $-4.3423$ & $-9.7702$ & $-17.3692$ \\
FindRoot: $\Delta_1(k)$ & $-0.9917$ & $-4.2474$ & $-9.6749$ & $-17.2737$ \\
FindRoot: $\Delta_2(k)$ & $-1.0006$ & $-4.2542$ & $-9.6814$ & $-17.2801$ \\ \hline
\end{tabular}
\caption{\label{tb:eigenvalues}Eigenvalues of the system \rf{eqn:eigenproblem} calculated using MATLAB's chebfun package compared to using Mathematica's NDEigenvalues and Mathematica's FindRoot on $\Delta_N(k)$ for $N=0,1,2$.}
\end{center}
\end{table}
\begin{figure}[tb]
\begin{center}
\def \sc {0.53}
\begin{tabular}{cc}
\includegraphics[scale=\sc]{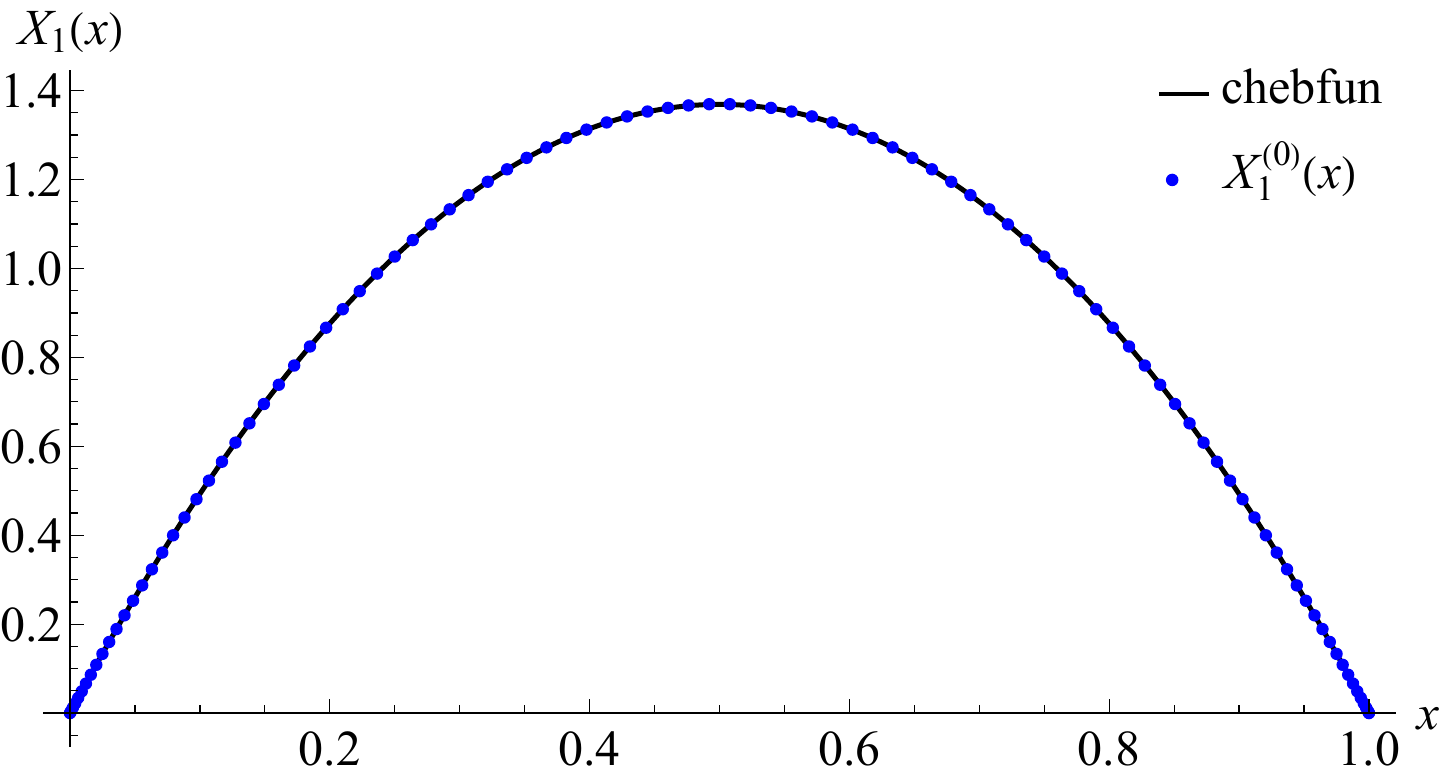} & \includegraphics[scale=\sc]{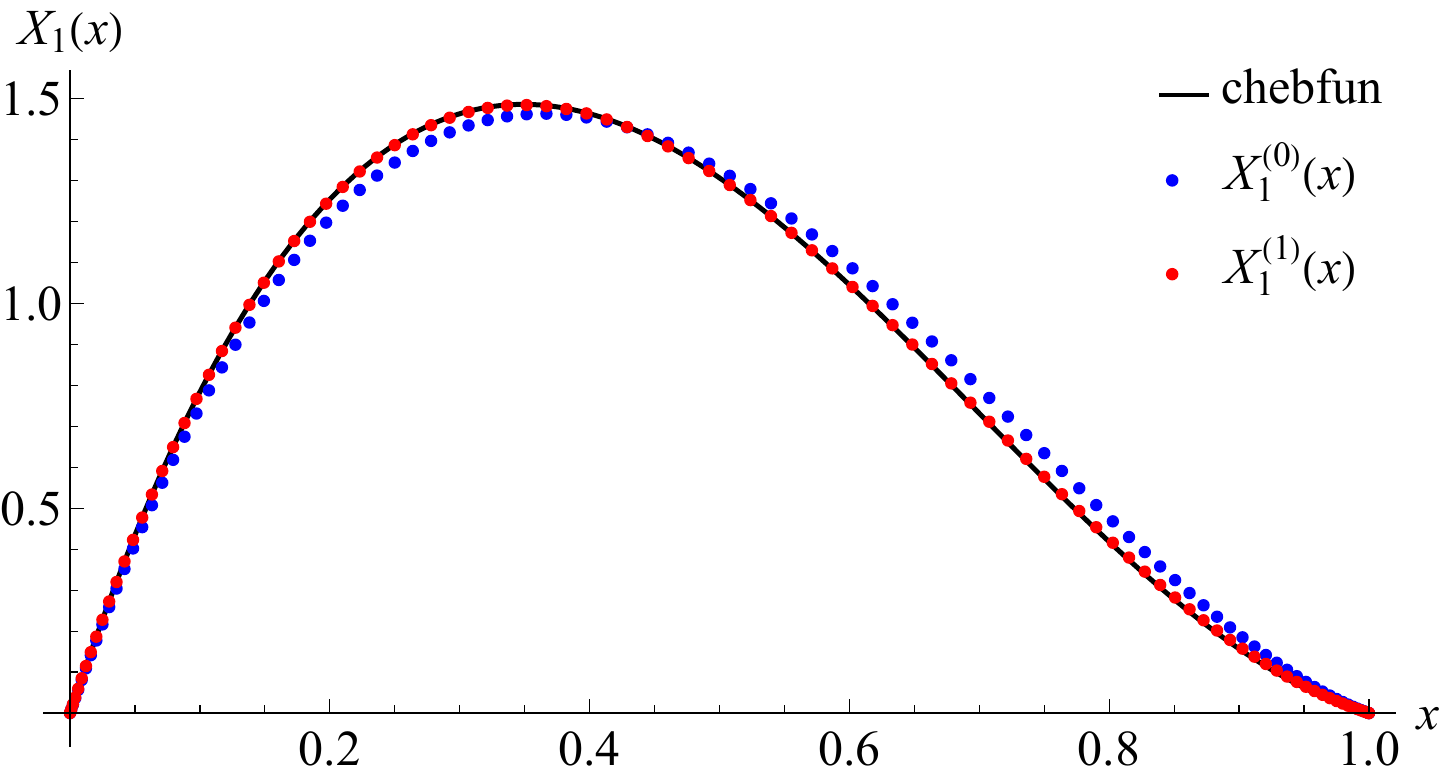}\\
(a) & (b)
\end{tabular}
\end{center}
\caption{(a) The successive approximations to the eigenfunctions \rf{eqn:efuns} shown with the numerically computed eigenfunctions using MATLAB's chebfun package. (b) The same, using $\sigma^2(x) = \frac{6337-252\sqrt{111}-4500x^2(11+x(-14+5x))}{9000(11+6x(-7+5x))}$. }
\la{fig:efuns}
\end{figure}

%%%%%%%%%%%%%%%%%%%%%%%%%%%%%%%%%%%%%%%%%%%%%%%%

% \section{Conclusion}

% \begin{appendices}

% % \section{Notation}

% % \subfile{sections/extras/notation} \label{sec:Notation}

% % \section{Derivations}

% % \subfile{sections/derivation}

% % \section{Proofs}

% % \subfile{sections/proof}

% \end{appendices}
%%%%%%%%%%%%%%%%%%%%%%%%%%%%%%%%%%%%

%% \nocite{*}
{\small
\bibliographystyle{abbrv}
\bibliography{references}

\begin{thebibliography}{10}

\bibitem{interface_heat}
B.~Deconinck, B.~Pelloni, and N.~E. Sheils.
\newblock Non-steady-state heat conduction in composite walls.
\newblock {\em Proceedings of the Royal Society. A, Mathematical, Physical, and
  Engineering Sciences}, 470:1--22, 2014.

\bibitem{interface_kdv}
B.~Deconinck, N.~E. Sheils, and D.~A. Smith.
\newblock The linear {K}d{V} equation with an interface.
\newblock {\em Communications in Mathematical Physics}, 347:489--509, 2016.

\bibitem{deconinctrogdonvasan}
B.~Deconinck, T.~Trogdon, and V.~Vasan.
\newblock The method of {F}okas for solving linear partial differential
  equations.
\newblock {\em SIAM Review}, 56:159--186, 2014.

\bibitem{farkasdeconinck2}
M.~Farkas and B.~Deconinck.
\newblock Solving linear, second-order evolution equations with spatially
  varying coefficients.
\newblock {\em In Preparation}, 2022.

\bibitem{vc_fokas}
A.~S. Fokas.
\newblock Boundary-value problems for linear {PDE}s with variable coefficients.
\newblock {\em Proceedings of the Royal Society. A, Mathematical, Physical, and
  Engineering Sciences}, 460:1131--1151, 2004.

\bibitem{JC_fokas_book}
A.~S. Fokas.
\newblock {\em A unified approach to boundary value problems}, volume~78 of
  {\em CBMS-NSF Regional Conference Series in Applied Mathematics}.
\newblock SIAM, Philadelphia, PA, 2008.

\bibitem{PT}
J.~P\"{o}schel and E.~Trubowitz.
\newblock {\em Inverse Spectral Theory}.
\newblock Academic Press Inc., 1987.

\bibitem{interface_dispersive}
N.~Sheils and B.~Deconinck.
\newblock Interface problems for dispersive equations.
\newblock {\em Studies in Applied Mathematics (Cambridge)}, 134:253--275, 2015.

\bibitem{interface_schrodinger}
N.~Sheils and B.~Deconinck.
\newblock The time-dependent {S}chrödinger equation with piecewise constant
  potentials.
\newblock {\em European Journal of Applied Mathematics}, 31:57--83, 2020.

\bibitem{interface_heat_ring}
N.~E. Sheils and B.~Deconinck.
\newblock Heat conduction on the ring: Interface problems with periodic
  boundary conditions.
\newblock {\em Applied Mathematics Letters}, 37:107--111, 2014.

\bibitem{interface_maps}
N.~E. Sheils and B.~Deconinck.
\newblock Initial-to-interface maps for the heat equation on composite domains.
\newblock {\em Studies in Applied Mathematics (Cambridge)}, 137:140--154, 2016.

\bibitem{treharne}
P.~A. Treharne and A.~S. Fokas.
\newblock Initial-boundary value problems for linear {PDE}s with variable
  coefficients.
\newblock {\em Mathematical Proceedings of the Cambridge Philosophical
  Society}, 143:221--242, 2007.

\end{thebibliography}
}

\end{document}